%%%%%%%%%%%%%%%%The manuscript	 starts from here. This is a latex file.
%%%%%%%%%%%%%%%%%%%%%%%%%%
%%%%%%% Igusa monoidal transforms for $g=2,3$
%%%%%%% Author Iku Nakamura
%%%%%%% Department of mathematics, Hokkaido university
%%%%%%% Sapporo, 060-0810, Japan
%%%%%%%%%%%%%%%%%%%%%%%%%%
%%%%%%%
%%%%%%%
\documentclass[11pt,twoside]{amsart}
\usepackage{amsmath}
\usepackage{amssymb}
\usepackage{amsfonts}
\usepackage{amscd}
\usepackage{color}
%\usepackage[small,nohug,heads=vee]{diagrams}
%\diagramstyle[labelstyle=\scriptstyle]
%%%%%%%Color commmands

%%%%%%%%%%%%%%%%%%%%%%%%%%%%%%
%\newtheorem{theorem}{Theorem}
%\newtheorem{defn}{Definition}\numberwithin{defn}{subsection}
%%%%%%%%%%%%%%%%%%%%%%%%%
\newtheorem{thm}{Theorem}[section]

\newtheorem{lemma}[thm]{Lemma}

\newtheorem{cor}[thm]{Corollary}

%\newtheorem{maintheorem}{Theorem}
%\renewcommand{\themaintheorem}{} 

%%%%%%%%%%%%%%%%%%%%%%%%%
\theoremstyle{definition}

%\newtheorem{notation}[thm]{Notation}

%\newtheorem{assumption}[thm]{Assumption}

%%%%%%%%%%%%%%%%%%%%%%%%%
%\theoremstyle{remark}

\newtheorem{rem}[thm]{Remark}

%%%%%%%%%%%%%%%%%%%%%%%%%%%%%%%%% 
%%%%%%%%%%%%%%%%%%%%%%%%%%%%%%%%%
%%%%%%%New Commands
\newcommand\op{\operatorname}

\newcommand\Del{\op{Del}}

\newcommand\diag{\op{diag}}

\newcommand\GL{\op{GL}}

\newcommand\Semi{\op{Semi}\,}

\newcommand\Spec{\op{Spec}\,}

%%%%%%%%%%%%%%%%%%%%%%%%%%%%%%%%%%%%

%%%%%%%%%%%%%%%%%%%%%%%%%%%%%%%%%%%%

%%%%%%%%%%%%%%%%%%%%%%%%%%%%

%%%%%%%%%%%%%%%%%%%%%%%%%%%%

%%%%%%%%%%%%%%%%%%%%%%%%%%%%

\newcommand{\wcL}{{\widetilde {\cal L}}}

\newcommand{\wP}{{\widetilde P}}
\newcommand{\wQ}{{\widetilde Q}}

%%%%%%%%%%%%%%%%%%%%%%%%%%%%%%%%%%%%
%%%%%%%%%%%%%%%%%%%%%%%%%%%%%%%Greek

%%%%%%%%%%%%%%%%%%%%%%%%%%%%%%%%%%%%

%%%%%%%%%%%%%%%%%%%%%%%%%%%%%%%%%%%%%

\newcommand{\bQ}{{\bold Q}}
\newcommand{\bR}{{\bold R}}

\newcommand{\bZ}{{\bold Z}}

\newcommand{\XR}{X_{\bold R}}

%%%%%%%%%%%%%%%%%%%%%%%%%%%%%%%%%%

\newcommand{\cD}{{\cal D}}

\newcommand{\cL}{{\cal L}}

%%%%%%%%%%%%%%%%%%%%%%%%%%%%%%%%%%%

%%%%%%%%%%%%%%%%%%%%%%%%%%%%%%%%%

%%%%%%%%%%%%%%%%%%%%%%%%%%%%%%%%%
\ifx\undefined\pf
\newcommand{\pf}{\proof}
\fi
\ifx\undefined\cal
\newcommand{\cal}{\mathcal}
\fi
\ifx\undefined\Bbb
\newcommand{\Bbb}{\mathbb}
\fi
\ifx\undefined\bold
\newcommand{\bold}{\mathbf}
\fi
\ifx\undefined\frak
\newcommand{\frak}{\mathfrak}
\fi
%%%%%%%%%%%%%%%%%%%%%%%%
%%%%%%%%%%%%%%%%%%%%%%%%
\begin{document}
\title[Delaunay decompositions in dimension 4]
{Delaunay decompositions in dimension 4}
\author{Iku Nakamura and Ken Sugawara}
\email{nakamura@math.sci.hokudai.ac.jp\\ 
sugawark@sap.hokkyodai.ac.jp}
\thanks{The first author is supported in part by the Grant-in-aid 
(No. 17K05188) for Scientific Research, 
JSPS}
\thanks{2000 {\it Mathematics Subject Classification}.
Primary 14J10;Secondary 14K10, 14K25.}
\thanks{{\it Key words and phrases}. Abelian variety, 
Delaunay decomposition, Stable quasi-abelian variety,  Voronoi cone, 
Voronoi compactification}
\date{\today}
%
%\date{July 17, 2001}
\begin{abstract}
We prove that any Delaunay decomposition in dimension 4 
is simplicially generating. 
\end{abstract}
\maketitle
\setcounter{section}{-1}
%\tableofcontents

\section{Introduction}
The Voronoi cone decompositions has been attracting our attention in the compactification problem of the moduli scheme of abelian 
varieties since Namikawa's work \cite{Namikawa76}. 
The objects to add as the boundary of the moduli scheme are stable quasi-abelian schemes, reduced or nonreduced, which are described in terms of Delaunay decompositions \cite{Nakamura75}, \cite{Namikawa76}, \cite{AN99}, \cite{Nakamura99}, 
\cite{Nakamura10}. The purpose of this note is 
to prove  that any Delaunay decomposition is simplicially generating 
if the dimension is less than 5 (Theorem~\ref{thm:nilpotency}).  As a corollary, if the dimension is less than 5, a kind of stable quasi-abelian schemes called  projectively stable quasi-abelian schemes 
are reduced, and therefore two kinds of stable quasi-abelian schemes,  
projectively stable quasi-abelian schemes  \cite{Nakamura99} and 
torically stable quasi-abelian varieties \cite{Nakamura10},  
are the same class of varieties.   
\section{Delaunay decompositions}\label{sec:Delaunay decomposition}
\subsection{Definitions and Notation}
%\begin{defn}
\label{defn:Delaunay cell} 
Let $\bZ_0$ (resp. $\bQ_0$, $\bR_0$) be 
the set of all nonnegative integers (resp. 
nonnegative rational numbers, 
nonnegative real numbers). 
Let $X$ be a lattice of rank $g$, 
$X_{\bR}=X\otimes\bR$,
and let $B:X\times X\to \bZ$ be 
a real positive definite symmetric bilinear form $B:X\times X\to \bZ$, 
which determines the inner product 
$B(\phantom{a}, \phantom{a})$ 
and a distance 
 $\|\ \|$ on the Euclidean space $X_{\bR}$ by  
$\|x\|:=\sqrt{B(x,x)}$ $(x\in X_{\bR})$ respectively. 
In what follows we fix $B$ once for all. 
%We denote for simplicity $\|a\|^2=(a,a)=a^2$.\par
For any $\alpha\in X_{\bR}$ we say 
that $a\in X$ is nearest to $\alpha$ if
\begin{equation*}
\|a-\alpha\|= \min\{ \|b-\alpha\|; b\in X \} \end{equation*}

We define a (closed) {\em $B$-Delaunay cell\/ $\sigma$} 
(or simply a D-cell if $B$ is understood)
 to be the closed convex closure of all lattice elements 
which are nearest to $\alpha$ for some $\alpha\in\XR$. 
Note that for a given D-cell $\sigma$,  $\alpha$ 
is uniquely defined only 
if $\sigma$ has the maximal possible dimension, 
equal to $g$. In this case we call $\alpha$  
the {\em hole} or the {\em center} of $\sigma$.
Together all the $B$-D-cells constitute 
a locally finite decomposition of $\XR$ into infinitely many 
bounded convex polyhedra which we call the {\em $B$-Delaunay 
decomposition $\Del_B$.\/}
It is clear from the definition 
that the Delaunay decomposition is invariant 
under translation by the lattice $X$ and 
that the 0-dimensional cells are precisely 
the elements of $X$. 
Let $\Del:=\Del_B$, and 
$\Del(c)$ the set of all the D-cells containing $c\in X$. 
%and $\Del^{(i)}(c)$ the set of all $i$-dimensional 
%D-cells in $\Del(c)$. 
\par
 For any closed subset $S$ of $\XR$, we define
$\langle S\rangle$ be the convex closure of $S$. 
If $0\in S$, then we define $C(0,S)$ (resp. $\Semi(0,S)$) 
to be the closed cone 
generated  by $S$ over $\bR_0$ 
(resp. the semigroup generated by $S$ over $\bZ_0$).  \par 
A closed convex bounded subset $\tau$ of $\XR$ 
 is called {\em integral} if $\tau$ is 
the convex closure of $\tau\cap X$. 
So any D-cell is integral.  
Let $\tau$ be a $g$-dimensional integral (closed convex) 
subset of $\XR$ which contains the origin $0$.  Then $\tau$  
is called 
{\it totally generating} if $C(0,\tau)\cap X=\Semi(0,\tau\cap X)$,   
while $\tau$ is called {\it simplicially generating} if there exist 
subsets $\tau_i$ $(i\in I)$
of $\tau$:
\begin{enumerate}
\item[$-$] $0\in\tau_i$ and $\tau_j$
have no common interiors for any $i\neq j$;
\item[$-$] $C(0,\tau_i)\cap X=\Semi(0,\tau_i\cap X)$;
\item[$-$] $C(0,\tau)=\cup_{i\in I}C(0,\tau_i)$.
\end{enumerate}

We need to modify the notion ''simplicially generating'' in \cite[1.1, pp.~662-663]{Nakamura99} into the above in order to prove Theorem~\ref{thm:nilpotency} because there might exist $\tau_i$ such that $0\not\in\tau_i$. See  \cite[1.6, pp.~662-663]{Nakamura99} and Remark.~\ref{rem:sigma4 does not contain 0}.

\subsection{The Voronoi cones}
\label{subsec:Voronoi cones}
Let $X$ be a lattice of rank $g$, and $Y_g$ (resp. $Y_g^+$)  the space 
of real positive semi-definite symmetric bilinear forms 
on $\XR$. Let $s_i$ $(i\in [1,g])$ be a $\bZ$-basis of $X$. 
Then we identify $Y_g$ with 
the space of real positive semi-definite symmetric $g\times g$ matrices $B$ 
in an obvious manner: $B=(b_{ij})=(B(s_i,s_j))$. For a subset $I$ of $[1,g]$, we set $s_I=\sum_{i\in I}s_i$. 
\par
A closed subset $V$  of $Y_g$ is called a {\em Voronoi cone} 
if there exists a unique 
Delaunay decomposition of $\XR$, which we denote by  $\Del_V$, such that 
\begin{enumerate}
\item[(a)] $\Del_V=\Del_B$ for any $B\in V^0$;
\item[(b)] for $B\in Y_g$, 
$B\in V^0$ iff $\Del_{B}=\Del_V$,
\end{enumerate}where $V^0$ stands for the relative interior of $V$. 
The Vononoi cones form a closed cone decomposition of $Y_g$,  
which is called the {\em (second) Voronoi decomposition} of $Y_g$. 
By \cite{Voronoi09}, the  {\em (second) Voronoi decomposition} of $Y_g$ is admissible \cite[p.~252]{AMRT75}, \cite[2.4]{Namikawa76}. 
\par
The following is known by \cite[pp.~157-175]{Voronoi09}.
\begin{enumerate}
\item[(i)] For $g\leq 3$, there is a unique $\GL(g,\bZ)$-equivalence class $V$
of $\frac{1}{2}g(g+1)$-dimensional Voronoi cone, and 
any g-dimensional D-cell of $\Del_{V}(0)$ is  
a simplex with $g$ vertices besides 0, which is simplicially generating.
\item[(ii)] There are only three $\GL(4,\bZ)$-equivalence classes of $10$-dimensional Voronoi cones $V_1$, $V_2$ and $V_3$.  
See Subsec.~\ref{subsec:notation} for $V_i$.
\item[(iii)] Any 4-dimensional D-cell of $\Del_{V_i}(0)$ $(i=1,2,3)$ is  
a simplex with 4 vertices besides 0, which is simplicially generating.   
\end{enumerate}

A proof of (iii) is outlined in Sec.~\ref{sec:Delaunay in dim 4}.  
See Theorem~\ref{thm:Vor dim 4}

\subsection{Fusion of Delaunay $g$-cells}
Let $V$ be a Voronoi cone, $V^0$ the relative interior of $V$, 
and $V'$ a face of $V$, 
that is, a Voronoi cone contained in $V$ 
which is a proper subset of $V$.  
Let $B:[0,1]\to Y_g$ be a continuous map 
such that $B((0,1])\subset V^0$ and $B(0)\in V'$. 
Let $\Del_{V}$ (resp. $\Del_{V'}$) 
be the Delaunay decomposition of $V$ (resp. $V'$). 

The following may be rather implicit in \cite{Voronoi09}, \cite{ER88} and 
\cite{Val03}.
\begin{lemma}
\label{lemma:fusion}Let $V$ and $V'$ be Voronoi cones  
such that $V'\subset V$.  
Then any Delaunay $g$-cell of $V$ is contained 
in some Delaunay $g$-cell of $V'$.
Conversely any Delaunay $g$-cell of $V'$ is the   
union of some Delaunay $g$-cells of $V$.
\end{lemma}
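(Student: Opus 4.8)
The plan is to exploit the classical correspondence between Delaunay decompositions and lower convex hulls under the \emph{paraboloid lifting} attached to the form $B$. For a positive definite $B$, lift each lattice point $x\in X$ to $(x,B(x,x))\in\XR\times\bR$, and let $f_B\colon\XR\to\bR$ be the convex piecewise-linear function whose graph is the lower boundary of the convex hull of $\{(x,B(x,x)):x\in X\}$. A lattice point $a$ is nearest to $\alpha$ precisely when $B(a,a)-2B(a,\alpha)$ is minimal, so the maximal domains of linearity of $f_B$ are exactly the $g$-dimensional cells of $\Del_B$: a $g$-cell $\sigma$ corresponds to an affine function $L$ with $B(x,x)\ge L(x)$ for all $x\in X$ and $B(v,v)=L(v)$ exactly on the vertices $v$ of $\sigma$. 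First I would record this dictionary carefully for a general (not necessarily standard) form $B$.

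For the first assertion, fix a Delaunay $g$-cell $\sigma$ of $V$. Since $B(t)\in V^0$ for every $t\in(0,1]$, property (a) guarantees that $\sigma$ is a cell of $\Del_{B(t)}$ for all such $t$, with a supporting affine function $L_t$ determined by $L_t(v)=B(t)(v,v)$ on the vertices $v$ of $\sigma$. Choosing $g+1$ affinely independent vertices pins down $L_t$ as a continuous function of $t$, so $L_t\to L_0$ as $t\to 0$. Passing to the limit in $B(t)(x,x)\ge L_t(x)$ and in the equalities on the vertices, I obtain $B(0)(x,x)\ge L_0(x)$ for all $x\in X$, with equality on every vertex of $\sigma$; hence $L_0$ supports the $B(0)$-lifted hull and its contact set is a single Delaunay cell $\tau$ of $\Del_{V'}$ containing all vertices of $\sigma$, so $\sigma\subseteq\tau$. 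As $\sigma$ is $g$-dimensional and $\XR$ has dimension $g$, the cell $\tau$ is automatically a $g$-cell.

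For the converse, let $\sigma'$ be a Delaunay $g$-cell of $V'$ and consider the $g$-cells of $\Del_V$ that meet the interior of $\sigma'$. If such a cell $\sigma$ contains a point $p$ in the interior of $\sigma'$, then by the first part $\sigma\subseteq\sigma''$ for some $g$-cell $\sigma''$ of $\Del_{V'}$; since distinct $g$-cells of $\Del_{V'}$ meet only along their boundaries, $p\in\op{int}\sigma'\cap\sigma''$ forces $\sigma''=\sigma'$, whence $\sigma\subseteq\sigma'$. Because $\Del_V$ tiles $\XR$ and its lower-dimensional skeleton has measure zero, these cells cover almost every point of $\op{int}\sigma'$; as $\sigma'$ is bounded and $\Del_V$ is locally finite, their union is a finite union of closed cells, hence closed, and therefore equals $\overline{\op{int}\sigma'}=\sigma'$. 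This exhibits $\sigma'$ as a union of $g$-cells of $V$.

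The main obstacle I anticipate is the limiting step $L_t\to L_0$: one must check that the affine minorant selected by $\sigma$ does not degenerate and that its contact set at $t=0$ is a single $g$-cell rather than a lower-dimensional face or a union of faces. The continuity argument above controls the degeneration, and dimension counting forces the contact set to be a single $g$-cell; the only delicate point is that the inequality $B(0)(x,x)\ge L_0(x)$ survives the passage to the limit over the (infinitely many, but locally finite) lattice points. The same conclusion can be reached through the empty-ellipsoid picture, by tracking how the center $\alpha_t$ and the radius of the empty sphere of $\sigma$ converge and how lattice points that are strictly exterior for $t>0$ may land on the limiting sphere, forcing cells to fuse; I regard the lifting formulation as cleaner, since it makes the fusion a one-line consequence of passing to the limit in a supporting inequality.
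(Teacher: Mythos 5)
Your proof is correct, and the underlying mechanism is the same as the paper's: hold the cell $\sigma$ fixed and pass to the limit $t\to 0$ in the data certifying $\sigma$ as a Delaunay cell of $B(t)$. The difference is which certificate you track. The paper follows the center $\alpha(t)$ of the empty sphere, noting it is a rational function of the entries of $B(t)$ with denominator $|B(t)|$ nonzero at $t=0$, and passes to the limit in the minimality condition $B(t)(a_i-\alpha(t),a_i-\alpha(t))=\min_{a\in X}B(t)(a-\alpha(t),a-\alpha(t))$; you follow the supporting affine function $L_t$ of the paraboloid lift and pass to the limit in $B(t)(x,x)\ge L_t(x)$. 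These are equivalent formulations (the affine function is $x\mapsto 2B(t)(x,\alpha(t))+\mathrm{const}$), but yours has the small advantage that $L_t$ is pinned down by interpolation on the vertices of $\sigma$, so its continuity is immediate without inverting $B(t)$. Your treatment of the converse is also more complete: the paper disposes of it in one sentence (``since $\XR$ is the union of Delaunay cells of $B(t)$''), whereas you supply the argument that a $V$-cell meeting $\op{int}\sigma'$ must lie in $\sigma'$ because distinct $g$-cells of $\Del_{V'}$ meet only along their boundaries, and that the resulting finite union of closed cells is closed, hence equals $\overline{\op{int}\sigma'}=\sigma'$. Two small remarks: both arguments tacitly need $B(0)$ to remain positive definite so that $\Del_{B(0)}$ consists of bounded cells (the paper makes this explicit via $|B(0)|\neq 0$; you should state it as well), and the point you flag as delicate --- that $B(0)(x,x)\ge L_0(x)$ survives the limit over infinitely many lattice points --- is not actually an issue, since the inequality is checked pointwise for each fixed $x\in X$.
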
 
\begin{proof}The Delaunay decomposition of $V$ (resp. $V'$) 
 is the same as that of $B(t)$ $(t\in (0,1]$  (resp. that of $B(0)$), 
Let $\sigma$ be a Delaunay $g$-cell of $V$, and 
$\alpha$ the center of $\sigma$. Though $\sigma$ is constant, 
the center $\alpha=\alpha(t)$ 
is a continuous function of $t\in (0,1]$, which is a polynomial 
function of the coefficients of $B(t)$ divided by $|B(t)|$. 
Since $|B(0)|\neq 0$, $\alpha(0):=\lim_{t\to 0}\alpha(t)$ exists. 
Since there is no element of $X$ in the interior of $\sigma$, 
$\sigma\cap X$ is the set of all vertices of $\sigma$.
Let $\sigma\cap X:=\{a_i; i\in [1,n]\}$. 
For $t\in (0,1]$,  
$$B(t)(a_i-\alpha(t),a_i-\alpha(t))=\min_{a\in X}B(t)(a-\alpha(t),a-\alpha(t)).$$ Hence 
$$B(0)(a_i-\alpha(0),a_i-\alpha(0))=\min_{a\in X}B(0)(a-\alpha(0),a-\alpha(0)).$$ This implies that $\alpha(0)$ is a center of a Delaunay cell $\sigma'$ which contains $\sigma$. Since $\XR$ is the union of Delaunay cells of $B(t)$, 
this shows that any Delaunay $g$-cell of $V'$ is the  
union of some Delaunay $g$-cells of $V$. 
\end{proof}

\begin{cor}
\label{cor:generation}
Let $\sigma'\in\Del_{V'}(0)$ 
such that $\sigma'=\cup_{\lambda\in\Lambda}\sigma_{\lambda}$ 
for some Delaunay cells $\sigma_{\lambda}\in\Del_{V}$. Then 
\begin{enumerate}
\item $\displaystyle{C(0, \sigma',0)=\bigcup_{0\in\sigma_{\lambda} \subset\sigma'}C(0, \sigma_{\lambda}).}$ 
\item If $C(0, \sigma_{\lambda})\cap X=\Semi(0, \sigma_{\lambda}\cap X)$ for any $\lambda$, then $C(0, \sigma')\cap X=\Semi(0,\sigma'\cap X)$.
\end{enumerate}
\end{cor}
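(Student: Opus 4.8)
The plan is to establish part (1) first and then obtain part (2) essentially for free. Throughout I would use that, by the definition in Subsection~\ref{defn:Delaunay cell}, $C(0,S)=\bR_0\cdot S$ whenever $0\in S$, so that each of $C(0,\sigma_\lambda)$ and $C(0,\sigma')$ is a convex cone; and that, by Lemma~\ref{lemma:fusion}, the cells $\sigma_\lambda\subset\sigma'$ subdivide $\sigma'$ into a locally finite polyhedral complex (each being a Delaunay $g$-cell of $V$ sitting inside the Delaunay $g$-cell $\sigma'$ of $V'$). Note also that since the $0$-dimensional cells are precisely the lattice points, $0$ is a vertex of $\sigma'$. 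I also read the displayed $C(0,\sigma',0)$ in the statement as $C(0,\sigma')$.

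For part (1), the inclusion $\bigcup_{0\in\sigma_\lambda\subset\sigma'}C(0,\sigma_\lambda)\subseteq C(0,\sigma')$ is immediate, because $\sigma_\lambda\subset\sigma'$ forces $C(0,\sigma_\lambda)\subseteq C(0,\sigma')$. For the reverse inclusion I would argue by local visibility from the vertex $0$. Let $y\in C(0,\sigma')$, so $y=tx$ with $t\geq 0$ and $x\in\sigma'$; we may assume $y\neq 0$, hence $x\neq 0$. The cells $\sigma_\lambda$ that contain $0$ form the star of $0$ in the subdivision, and their union is a closed neighborhood of $0$ inside $\sigma'$; by local finiteness only finitely many cells meet a small ball about $0$, and those not containing $0$ are bounded away from $0$. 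Hence there is $\delta>0$ with $\delta x\in\sigma_\mu$ for some $\mu$ satisfying $0\in\sigma_\mu\subset\sigma'$. Then $y=tx=(t/\delta)(\delta x)\in C(0,\sigma_\mu)$, which gives $C(0,\sigma')\subseteq\bigcup_{0\in\sigma_\lambda\subset\sigma'}C(0,\sigma_\lambda)$ and thus equality.

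For part (2), the inclusion $\Semi(0,\sigma'\cap X)\subseteq C(0,\sigma')\cap X$ holds in general, since any $\bZ_0$-combination of lattice points of $\sigma'$ is itself a lattice point lying in the convex cone $C(0,\sigma')$. For the converse I would take $z\in C(0,\sigma')\cap X$; by part (1) there is $\lambda$ with $0\in\sigma_\lambda\subset\sigma'$ and $z\in C(0,\sigma_\lambda)$, whence $z\in C(0,\sigma_\lambda)\cap X=\Semi(0,\sigma_\lambda\cap X)$ by hypothesis. Since $\sigma_\lambda\cap X\subseteq\sigma'\cap X$ yields $\Semi(0,\sigma_\lambda\cap X)\subseteq\Semi(0,\sigma'\cap X)$, I conclude $z\in\Semi(0,\sigma'\cap X)$, which is the asserted equality.

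The main obstacle is the reverse inclusion in part (1): one must know that the cells $\sigma_\lambda$ through $0$ cover a full neighborhood of $0$ within $\sigma'$, so that every ray emanating from $0$ into $\sigma'$ immediately enters one such cell. This is exactly where the local finiteness of the Delaunay decomposition and the fact that $0$ is a vertex of $\sigma'$ are used. Once this visibility statement is secured, both parts reduce to the elementary cone and semigroup manipulations above.
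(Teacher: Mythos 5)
Your proof is correct and follows essentially the same route as the paper: the paper also reduces $C(0,\sigma')$ to the cone over $D(r)\cap\sigma'$ for a small ball $D(r)$ about the origin, observes that this intersection is covered by the cells $\sigma_\lambda$ containing $0$, and deduces (2) directly from (1). Your write-up merely spells out the local-finiteness and star-of-$0$ details that the paper leaves implicit.
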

\begin{proof}Let $D(r)$ be a ball of radius $r$ with the origin as center.  
Since $\sigma'$ is convex, $C(0,\sigma')=C(0,D(r)\cap\sigma')$ for a sufficiently small $r>0$. By Lemma~\ref{lemma:fusion}, 
$$C(0,D(r)\cap\sigma')=C(0,D(r)\cap\cup_{\lambda\in\Lambda}\sigma_{\lambda})
=\bigcup_{0\in\sigma_{\lambda}}C(0,\sigma_{\lambda}),$$ 
where we caution that there might exist $\lambda\in\Lambda$ such that 
$0\not\in\sigma_{\lambda}$. This proves (1). 
\footnote{Not all $\sigma_{\lambda}$ contains $0$. 
See Remark~\ref{rem:sigma4 does not contain 0}. }
(2) follows from (1). 
\end{proof}

\subsection{Simplicial generation}
The purpose of this note is to prove the following 
theorem, which seems to be known but only implicit in \cite{Voronoi09}:
\begin{thm}\label{thm:nilpotency}
If the dimension is less than five, any Delaunay decomposition 
is simplicially generating,
in particular, the nilpotency \cite[1.15]{AN99} 
of any Delaunay decomposition 
is equal to one. 
\end{thm}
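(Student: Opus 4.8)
The plan is to reduce the general statement to the explicit low-dimensional classification already quoted in the excerpt. Since the notion of ``simplicially generating'' is a property of the Delaunay decomposition $\Del_V$ attached to a Voronoi cone $V$, and since every Delaunay decomposition of $\XR$ arises as $\Del_V$ for a unique Voronoi cone $V\subset Y_g$, it suffices to verify the property for each $\GL(g,\bZ)$-equivalence class of Voronoi cones when $g\le 4$. The property is visibly $\GL(g,\bZ)$-invariant (an integral linear automorphism of $X$ carries cones $C(0,\tau)$, the lattice $X$, and the semigroups $\Semi(0,\tau\cap X)$ to the corresponding objects), and it is also invariant under translation by $X$, so I only need to treat each equivalence class at the origin, i.e.\ the $g$-cells in $\Del_V(0)$.

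First I would record the easy base case. For $g\le 3$, item (i) of the cited Voronoi classification says there is a single equivalence class of top-dimensional Voronoi cone, and every $g$-dimensional D-cell of $\Del_V(0)$ is a simplex with $g$ vertices besides $0$ which is simplicially generating; taking the $\tau_i$ to be the maximal-dimensional D-cells containing $0$ then exhibits $\Del_V$ as simplicially generating in the sense of the modified definition. The content therefore lies in $g=4$. Here item (ii) says there are exactly three equivalence classes of $10$-dimensional Voronoi cones $V_1,V_2,V_3$, and item (iii) (to be proved via Theorem~\ref{thm:Vor dim 4}) asserts that every $4$-dimensional D-cell of $\Del_{V_i}(0)$ is a simplex with $4$ vertices besides $0$ which is simplicially generating.

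The main step is then to pass from the maximal cones to an arbitrary Voronoi cone $V'$ of dimension $\le 10$ in $Y_4$, i.e.\ to a proper face. Every such $V'$ is a face of some maximal cone $V=V_i$, and Lemma~\ref{lemma:fusion} tells us that each Delaunay $4$-cell $\sigma'$ of $V'$ is a union $\sigma'=\bigcup_{\lambda}\sigma_\lambda$ of Delaunay $4$-cells $\sigma_\lambda$ of $V$. I would apply Corollary~\ref{cor:generation}: for the cells $\sigma'\in\Del_{V'}(0)$ containing $0$, part (2) of the corollary shows that if each $\sigma_\lambda$ satisfies $C(0,\sigma_\lambda)\cap X=\Semi(0,\sigma_\lambda\cap X)$, then $\sigma'$ satisfies $C(0,\sigma')\cap X=\Semi(0,\sigma'\cap X)$ as well. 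By item (iii) each $\sigma_\lambda$ containing $0$ is a simplex, hence totally generating, so the hypothesis of Corollary~\ref{cor:generation}(2) is met. Choosing the $\tau_i$ of the simplicial-generation definition to be the sub-simplices $\sigma_\lambda$ with $0\in\sigma_\lambda$ (these have no common interiors and their cones cover $C(0,\sigma')$ by part (1) of the corollary) exhibits each $\sigma'$, and hence $\Del_{V'}$, as simplicially generating.

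The subtle point — and the reason the definition had to be modified — is that in the fusion $\sigma'=\bigcup_\lambda\sigma_\lambda$ some of the pieces $\sigma_\lambda$ need not contain $0$, as flagged in Remark~\ref{rem:sigma4 does not contain 0}; this is exactly why Corollary~\ref{cor:generation}(1) carries the caution that the cone decomposition of $C(0,\sigma')$ only involves those $\sigma_\lambda$ with $0\in\sigma_\lambda$. I would check that the pieces with $0\in\sigma_\lambda$ still cover $C(0,\sigma')$ near the origin (which is precisely Corollary~\ref{cor:generation}(1)) so that the discarded pieces cause no gap in the cone. The main obstacle I anticipate is not this reduction, which is formal once the corollary is in hand, but the verification underlying item (iii)/Theorem~\ref{thm:Vor dim 4}: one must confirm by the explicit $g=4$ data that every top-dimensional D-cell through $0$ in each of $V_1,V_2,V_3$ really is a $4$-simplex, equivalently that its generators form a lattice basis, so that it is totally (hence simplicially) generating. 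Finally, the assertion that the nilpotency equals one is an immediate restatement: simplicial generation of $\Del$ forces the associated graded / degeneration data of \cite[1.15]{AN99} to be reduced, i.e.\ nilpotency $1$, so this follows with no additional argument.
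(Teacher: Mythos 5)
Your proposal is correct and follows essentially the same route as the paper, whose proof is the one-line ``Clear from Corollary~\ref{cor:generation} and Subsec.~\ref{subsec:Voronoi cones}~(i)--(iii)'': reduce to faces of the maximal Voronoi cones via the fusion lemma and Corollary~\ref{cor:generation}, invoke the classification showing the top-dimensional D-cells of $V$ for $g\le 3$ and of $V_1,V_2,V_3$ for $g=4$ are simplices, and note the caveat of Remark~\ref{rem:sigma4 does not contain 0} that some pieces $\sigma_\lambda$ may miss the origin. You have merely made explicit the steps the paper leaves implicit.
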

See \cite[1.14]{AN99} and \cite[1.6]{Nakamura99}.  
\begin{proof}Clear from Corollary~\ref{cor:generation} and 
Subsec.~\ref{subsec:Voronoi cones}~(i)-(iii). 
\end{proof}

\begin{cor}Any projectively stable quasi-abelian scheme (abbr. PSQAS) 
over a field is reduced if its dimension is less than 5. 
\end{cor}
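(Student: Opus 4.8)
The plan is to deduce the reducedness of a PSQAS from the purely combinatorial statement of Theorem~\ref{thm:nilpotency}, via the explicit local description of the structure sheaf. Let $P$ be a PSQAS over a field $k$, of dimension $g<5$. By the construction in \cite{Nakamura99}, $P$ arises as the closed fibre of a degenerating family, and its structure sheaf is assembled from semigroup algebras attached to the Delaunay decomposition $\Del_B$ of the associated degeneration datum; here $B$ is a form on the character lattice $X$ of the torus part, of rank $r\le g$, so that $r<5$. Concretely, on the stratum corresponding to a Delaunay $g$-cell $\sigma$ with $0\in\sigma$, the completed local ring of $P$ is governed by the semigroup ring $k[\Semi(0,\sigma\cap X)]$, whose normalisation corresponds to $k[C(0,\sigma)\cap X]$. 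Thus the nilpotents of $\cO_P$ measure exactly the gap between $\Semi(0,\sigma\cap X)$ and $C(0,\sigma)\cap X$.

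The first step is to recall, from \cite[1.14--1.15]{AN99} and \cite[1.6]{Nakamura99}, the dictionary between the nilpotency of $\Del_B$ and the scheme structure of $P$: the nilpotency index of $\Del_B$ coincides with the index of nilpotency of the nilradical of $\cO_P$, so that $P$ is reduced if and only if this nilpotency equals one. Under this dictionary it suffices to show that every Delaunay $g$-cell $\sigma$ of $\Del_B$ with $0\in\sigma$ is totally generating, i.e. $C(0,\sigma)\cap X=\Semi(0,\sigma\cap X)$, which is precisely the statement that the corresponding local chart carries no nilpotents.

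The second step is to invoke Theorem~\ref{thm:nilpotency} directly. Since $r<5$, that theorem asserts both that $\Del_B$ is simplicially generating and that its nilpotency equals one; combined with the first step, nilpotency one forces the nilradical of $\cO_P$ to vanish, so $P$ is reduced. Equivalently, arguing from generation rather than nilpotency, Corollary~\ref{cor:generation}~(2) upgrades the simplicial generation of each $g$-cell $\sigma$ with $0\in\sigma$ (its refinement by a maximal Voronoi cone splits it into totally generating simplices, as recorded in Subsec.~\ref{subsec:Voronoi cones}~(i)--(iii)) to the totally generating identity $C(0,\sigma)\cap X=\Semi(0,\sigma\cap X)$, which is exactly the vanishing of nilpotents on that chart.

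The main obstacle is not the combinatorics, which is settled by Theorem~\ref{thm:nilpotency}, but the faithful transcription of the geometric claim into the combinatorial one: I would need to verify that the only source of nilpotents in the coordinate ring of $P$ is the failure of the semigroups $\Semi(0,\sigma\cap X)$ to saturate to $C(0,\sigma)\cap X$, and that no further nilpotents are introduced along the lower-dimensional strata or in the gluing of the charts. This is the content of the nilpotency formalism of \cite[1.14--1.15]{AN99}, on which the argument rests; granting it, the corollary follows at once from the generation results established above.
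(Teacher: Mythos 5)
There is a genuine gap: you assert a ``dictionary'' under which the nilradical of the structure sheaf of the PSQAS is measured exactly by the failure of $\Semi(0,\sigma\cap X)$ to saturate to $C(0,\sigma)\cap X$, and you then declare that nilpotency one forces reducedness --- but this dictionary is precisely the point that needs proof, and you explicitly defer it (``I would need to verify that the only source of nilpotents \dots''). Note that as literally stated your reduction is even suspect: a semigroup ring $k[\Semi(0,\sigma\cap X)]$ over a field is a domain, hence reduced, so the non-reducedness of a PSQAS cannot be read off from that ring alone; it arises from the degeneration data, i.e.\ from the $R$-algebra $S(0)=R[\xi_{x,0}]$ and its reduction modulo the maximal ideal of the discrete valuation ring $R$.

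The paper closes this gap with two geometric inputs that your proposal omits. First, after reducing to a totally degenerate family $(Q,\cL_Q)$ over $\Spec R$, it compares the charts $S(0)=R[\xi_{x,0},\,x\in\Semi(0,\sigma)]$ of $\wQ$ with the charts $R(0)=R[\zeta_{x,0},\,x\in C(0,\sigma)\cap X]$ of the normalization $\wP$; the combinatorial identity $C(0,\sigma)\cap X=\Semi(0,\sigma)$ from Theorem~\ref{thm:nilpotency} (via Corollary~\ref{cor:generation}) gives $\wQ=\wP$, hence $Q_0=P_0$. Second --- and this is the step your argument has no substitute for --- even knowing $Q_0=P_0$ one must still prove that the special fiber $P_0$ of the \emph{normalized} family is reduced; the special fiber of a normal scheme over a DVR need not be. The paper does this by quoting generic reducedness of $P_0$ from \cite[3.12]{AN99} together with the Cohen--Macaulay property \cite[4.1]{AN99}, which between them exclude torsion in $\cO_{P_0}$ supported on components and in lower dimension. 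Without these two inputs your argument reduces the corollary to an unproved equivalence rather than to Theorem~\ref{thm:nilpotency}.
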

\begin{proof}
 Let $Q_0$ be a $g$-dimensional PSQAS over $k$. 
See \cite{Nakamura99}. 
It suffices to prove the corollary in the case where 
that it is a special fiber of a {\em totally degenerate} 
flat projective family $(Q,\cL_Q)$ over $\Spec R$ for some complete discrete valuation ring $R$ with residue field $k$. The family $(Q,\cL_Q)$ is given by \cite[5.8]{Nakamura99}. Let $(P,\cL)$ be the normalization of $(Q,\cL_Q)$, and 
$\cL$ the pullback of $\cL_Q$ to $P$ \cite[3.1, 5.1]{Nakamura99}. The family 
$(Q,\cL_Q)$ (resp. $(P,\cL)$) is an algebraization of the formal quotient of the formal completion of an $R$-scheme $(\wQ,\wcL_Q)$ (resp. $(\wP,\wcL)$) locally of finite type, each of which is given in \cite[5.3]{Nakamura99} (resp. a bit implicitly in \cite[3.4, 3.7, 3.8, 4.10]{Nakamura99}). The local affine chart 
$U(c)$ of $\wP$ 
$(c\in X)$ is the normalization of a local affine chart 
$W(c)$ of $\wQ$, where $X$ is a lattice of rank $g$.  Let 
$R(c)$ (resp. $S(c)$) be  
the coordinate ring of $U(c)$ (resp. $W(c)$), where each $R(c)$ (resp. $S(c)$) 
is isomorphic to $R(0)$ (resp. $S(0)$). Moreover there are some monomials 
$\zeta_{x,0}$ and $\xi_{x,0}$ such that 
\begin{align*}
R(0)&=R[\zeta_{x,0}, x\in C(0,\sigma)\cap X, \sigma\in\Del(0)],\\
S(0)&=R[\xi_{x,0}, x\in\Semi(0,\sigma), \sigma\in\Del(0)], 
\end{align*}where  by \cite[Definition~3.4]{Nakamura99}, 
 $\xi_{x,0}=\zeta_{x,0}$ if $x\in\sigma\cap X$ for $\sigma\in\Del(0)$. \par
By Theorem~\ref{thm:nilpotency}, $C(0,\sigma)\cap X=\Semi(0,\sigma)$ for any 
$g$-dimensional Delaunay cell $\sigma$ if $g\leq 4$. It follows that $\wQ=\wP$, $Q=P$ and $Q_0=P_0$.\par
 It follows from \cite[3.12]{AN99} that 
$P_0$ is generically reduced (along any irreducible component of it) 
if $g\leq 4$. Hence there is no nontrivial torsion of the structure sheaf 
$O_{P_0}$ whose support contains an irreducible component of $P_0$. 
Since $P$ is Cohen-Macaulay, so is $P_0$ \cite[4.1]{AN99}, hence 
there is no torsion of  
$O_{P_0}$ whose support is nonempty but at most $(g-1)$-dimensional. 
This completes the proof.
\end{proof}

\section{Degree two and three}
\subsection{Degree two}
\label{subsec:degree two}
Let $s_1$ and $s_2$ be a basis of $X$, $s_{12}:=s_1+s_2$ 
and $Y_2$ the space of real symmetric 
positive semi-definite bilinear forms on $\XR$, each bilinear form being 
identified with a $2\times 2$ matrix.  
We define $V_1$, $V_2$ and $V_1\cap V_2$ as follows:
\begin{align*}
V_1&=\bR_0 e_{13}+\bR_0 e_{23}+\bR_0 e_{12},\\
V_2&=\bR_0 e_{13}+\bR_0 e_{23}+\bR_0 f_{12},\\
V_1\cap V_2&=\bR_0 e_{13}+\bR_0 e_{23},
\end{align*}where 
\begin{gather*}
e_{13}=\begin{pmatrix}
1&0\\
0&1
\end{pmatrix},\ 
e_{23}=\begin{pmatrix}
0&0\\
0&1
\end{pmatrix},\
e_{12}=\begin{pmatrix}
1&-1\\
-1&1
\end{pmatrix},\
f_{12}=\begin{pmatrix}
1&1\\
1&1
\end{pmatrix}.
\end{gather*} 
Then $V_i$ $(i=1,2)$ and $V_1\cap V_2$ are Voronoi cones, 
each of whose relative interior determines 
a unique Delaunay decomposition of $\XR=\bR^2$:
\begin{align*}
\Del_{V_1}&=\{\sigma_1, \sigma_2\ \text{their translates by $X$ 
and their faces}\},\\
\Del_{V_2}&=\{\sigma_3, \sigma_4\ \text{their translates by $X$ 
and their faces}\},\\
\Del_{V_1\cap V_2}&=\{\sigma_5\ \text{their translates by $X$ 
and their faces}\}
\end{align*}
where 
\begin{gather*}
\sigma_{1}=\langle 0,s_1,s_{12}\rangle,\ 
\sigma_{2}=\langle 0,s_2,s_{12}\rangle,\\
\sigma_{3}=\langle 0,s_1,s_2\rangle,\ 
\sigma_{4}=\langle s_1,s_2,s_{12}\rangle,\\
\sigma_{5}=\langle 0,s_1,s_2,s_{12}\rangle.
\end{gather*}

Let $B\in V_1^0$ and let $c_i$ be the center of $\sigma_i$ $(i=1,2)$: 
\begin{gather*}
B(c_1,c_1)=B(s_1-c_1.s_1-c_1)=B(s_{12}-c_1,s_{12}-c_1),\\
B(c_2,c_2)=B(s_2-c_2.s_2-c_2)=B(s_{12}-c_2,s_{12}-c_2).
\end{gather*}Equivalently, 
\begin{align*}
B(s_1,s_1)&=2B(s_1,c_1)\\
B(s_2,s_2)+2B(s_1,s_2)&=2B(s_2,c_1),\\
B(s_2.s_2)&=2B(s_2,c_2)\\
B(s_1,s_1)+2B(s_1,s_2)&=2B(s_1,c_2).
\end{align*}

Let $V_1^0$ be the relative interior of $V_1$, $t:=b_{12}$, 
$B(t):=(b_{ij})\in V_1^0$,  
and consider the limit $t\to 0$. 
Let $B(0)=\lim B(t)=\diag(b_{11},b_{22})$. 
Since $B(0)(s_1,s_2)=0$, $B(0)(\bullet,\lim c_1)=B(0)(\bullet,\lim c_2)$, hence $\lim c_1=\lim c_2$. This implies that $\sigma_1$ and $\sigma_2$ 
fuse together into a D-cell of $B(0)$:
$$\sigma_5=\sigma_1\cup\sigma_2.$$ 

In other words, the Delaunay decomposition of $\lim B$ is obtained 
as a suitable fusion of 
the Delaunay decomposition of $B$. \par
Similarly as we take the limit from $V_2$ to $V_1\cap V_2$, we have another fusion 
$$\sigma_5=\sigma_3\cup\sigma_4.
$$

\begin{rem}
\label{rem:sigma4 does not contain 0}Note that 
$C(0,\sigma_5)=C(0,\sigma_3)$ and $0\not\in\sigma_4$. 
\end{rem}

\subsection{Degree three}
There is, up to $\GL(3,\bZ)$-equivalence, 
 a unique Voronoi cone of dimension 6 in $Y_3$:
$$V=\Sigma_{1\leq i<j\leq 4}\bR_0e_{ij}.
$$

The Delaunay decomposition of $V$ consists of 
3 dimensional cells
\begin{gather*}
\sigma_{ijk}:=\langle 0,s_i,s_{ij},s_{ijk}\rangle,
\ 1\leq i,j,k\leq 3,\ \text{all distinct}\\
\text{and $\GL(3,\bZ)$-transforms and $\bZ^3$-translates.}
\end{gather*}

\section{Degree four}

\subsection{Notation}
\label{subsec:notation}
We define 
\begin{gather*}
e_{ij}=\begin{pmatrix}0&0&0&0\\
0&\overset{\underset{\smile}{i}}{1}&\overset{\underset{\smile}{j}}{-1}&0\\
0&-1&1&0\\
0&0&0&0
\end{pmatrix}\begin{matrix}\phantom{0}\\
\overset{\phantom{\underset{\smile}{i}}}{(i}\\
(j\\
\phantom{0}\end{matrix},\quad 
e_{i5}=\begin{pmatrix}0&0&0&0\\
0&\overset{\underset{\smile}{i}}{1}&0&0\\
0&0&0&0\\
0&0&0&0
\end{pmatrix}\begin{matrix}\phantom{0}\\
\overset{\phantom{\underset{\smile}{i}}}{(i}\\
\phantom{0}\\
\phantom{0}\end{matrix}\\ 
e_{12,345}=\begin{pmatrix}2&1&-1&-1\\
1&2&-1&-1\\
-1&-1&2&0\\
-1&-1&0&2
\end{pmatrix},\ 
f_{1234}=\begin{pmatrix}1&1&-1&-1\\
1&1&-1&-1\\
-1&-1&1&1\\
-1&-1&1&1
\end{pmatrix}\\
g_{123}=\begin{pmatrix}
1&1&-1&0\\
1&1&-1&0\\
-1&-1&1&0\\
0&0&0&0
\end{pmatrix},\ 
g_{124}=\begin{pmatrix}
1&1&0&-1\\
1&1&0&-1\\
0&0&0&0\\
-1&-1&0&1
\end{pmatrix}
\end{gather*}where 
$$\omega=e_{12,345}=\frac{1}{3}(\sum_{(i,j)\neq (1,2)}e_{ij}+f_{1234}+g_{123}+g_{124}).
$$

We also define
\begin{align*}
V_1&=\Sigma_{i,j}\bR_{0}e_{ij},\\
V_1\cap V_2&=\Sigma_{(i,j)\neq (12)}\bR_{0}e_{ij}\\
V_2&=\Sigma_{(i,j)\neq (1,2)}\bR_{0}e_{ij}+\bR_{0}e_{12345},\\
V_2\cap V_3&=\Sigma_{(i,j)\neq (1,2), (3,4)}\bR_{0}e_{ij}
+\bR_{0}e_{12345},\\
V_3&=\Sigma_{(i,j)\neq (1,2), (3,4)}\bR_{0}e_{ij}
+\bR_{0}e_{12345}+\bR_{0}f_{1234}\\
W_0&=\Sigma_{(i,j)\neq (1,2), (3,4)}\bR_{0}e_{ij}
+\bR_{0}f_{1234}\\
K&=\Sigma_{(i,j)\neq (1,2)}\bR_{0}e_{ij}+\bR_{0}g_{123}
+\bR_{0}g_{124}+\bR_{0}f_{1234},
\end{align*}where $V_i\subset K$ $(i=2,3)$. 
It is clear that $V_i$ is a 10-dimensional cone, which is  
the same as the Delaunay triangulation $\cD_i$ 
in the sense of \cite[p.~51]{Val03}.

\begin{rem}With the notation in \cite[p.~234]{Igusa67}, 
we define the chambers
\begin{align*}
F_{ab}:&=\Sigma_{(i,j)\neq (a,b)}\bR_{0}e_{ij}+\bR_{0}e_{abcde},\\
G_{abcd}:&=\Sigma_{(i,j)\neq (a,b),(c,d)}\bR_{0}e_{ij}+\bR_{0}e_{abcde}
+\bR_{0}e_{cdabe}\\
F_{abcd}&=\{Y=(y_{ij})\in G_{abcd};y_{ab}\geq y_{cd}\},\\
F_{cdab}&=\{Y=(y_{ij})\in G_{abcd};y_{cd}\geq y_{ab}\}
\end{align*}for any subset $\{a,b\}\subset [1,4]$ and 
$\{c,d\}=[1,4]\setminus\{a,b\}$. This supplements 
the definition of $F_{ab}$ and $F_{abcd}$ 
in \cite[p.~664]{Nakamura99}.

Note that $F_{12}=V_2$ is a Voronoi cone. Another chamber $G_{1234}$ is the union of two $\GL(4,\bZ)$-equivalent Voronoi cones $V_3$ and $V_4$ (of type III) where  
\begin{gather*}
V_3=\Sigma_{(i,j)\neq (1,2), (3,4)}\bR_{0}e_{ij}
+\bR_{0}e_{12345}+\bR_{0}f_{1234}=F_{1234},\\
V_4:=\Sigma_{(i,j)\neq (1,2), (3,4)}\bR_{0}e_{ij}
+\bR_{0}e_{34125}+\bR_{0}f_{1234}=F_{3412},\\
V_3\cap V_4=\Sigma_{(i,j)\neq (1,2), (3,4)}\bR_{0}e_{ij}+\bR_{0}f_{1234}=W_0.
\end{gather*}because 
\begin{equation*}e_{12345}+e_{34125}
=f_{1234}+\sum_{(i,j)\neq (1,2),(3,4)}e_{ij}.
\end{equation*}
\end{rem}

\subsection{Black forks and red triangles}
\label{subsec:BF and RT}
In what follows we identify the space of  
real symmetric bilinear forms on $\bR^4$ 
with the space of real quadratic polynomials 
of 4 variables in an obvious way.  There are two perfect cones $V_1$ and $K$ 
in 4 variables by \cite[Part I, p.~172]{Voronoi09}. 
The cone $V_1$ is a 10-dimensinal cone generated by 
\begin{gather*}
x_i^2,  (x_i-x_j)^2,  (i, j\in[1,4])
\end{gather*}where $x_i^2$ (resp. $(x_i-x_j)^2$) corresponds 
to $e_{i5}$ (resp. $e_{ij}$).    \par
The cone  
$K$ is also a 10-dimensional cone generated by 
\begin{equation}
\begin{aligned}\label{eq:generators of K}
&x_i^2,  (x_i-x_j)^2,  (i, j\in[1,4],(i,j)\neq (1,2)), \\
(x_1+&x_2-x_3)^2, (x_1+x_2-x_4)^2, (x_1+x_2-x_3-x_4)^2.
\end{aligned}
\end{equation}

Let $\omega$ be  
\begin{align*}
\omega&=\frac{1}{3}(\sum_{i=1}^4x_i^2+\sum_{(i,j)\neq (1,2)}(x_i-x_j)^2\\
&+(x_1+x_2-x_3)^2+(x_1+x_2-x_4)^2+(x_1+x_2-x_3-x_4)^2)\\
&=2\sum_{i=1}^4x_i^2+2x_1x_2-2\sum_{i=1,2;j=3,4}x_ix_j.
\end{align*}
Note that $\omega=e_{12345}$ in matrix form.

By the transformation (which we refer to as {\em Voronoi transformation})
\begin{equation}
\label{eq:Voronoi transf}
(x_1,x_2,x_3,x_4)\mapsto (x_1+x_2, x_1-x_2, x_1-x_3, x_1-x_4)
\end{equation}
 every generator (\ref{eq:generators of K}) of $K$ is transformed into 
either $(x_i+x_j)^2$ or $(x_i-x_j)^2$. while $(x_1-x_2)^2$ is transformed into $4x_2^2$. 
Hence 
\begin{equation*}
K=\{x=\sum_{i<j}\beta_{ij}(x_i+x_j)^2+\sum_{i<j}\rho_{ij}(x_i-x_j)^2; 
\beta_{ij}\geq 0, \rho_{ij}\geq 0\}.
\end{equation*}

Let $W$ be a face of $K$ of codimension one. 
By \cite[p.~1060]{ER88}, there exist precisely three pairs $(i,j)$ 
 such that $x\in W$ iff either 
$\beta_{ij}=0$ or $\rho_{ij}=0$. 
If $\beta_{pq}=0$  (resp. $\rho_{pq}=0$), we connect the vertices $p$ and $q$ by a black edge (resp. a red edge). Then 
\begin{enumerate}
\item[(i)]
to each face $W$, we can associate a connected graph $\Gamma(W)$ with three colored edges, which is  
either forked or triangular. 
\item[(ii)]
Conversely, for any such graph $\Gamma$ 
there is a unique face $W$ of $K$ of codimension one such that $\Gamma(W)=\Gamma$ (as colored graphs). 
\item[(iii)]
There are 32 forked graphs and 32 triangular graphs by 
\cite[2.1, p.~1061]{ER88}. (Easy)
\item[(iv)]
There are two $\GL(4,\bZ)$-equivalence classes of 64 faces, one being the equivalence class of black forked faces, and the other being the equivalence class of red triangular faces. We call the first equivalence class (resp. the second) BF (resp. RT). 
\item[(v)]
By \cite[2.5, p.~1063]{ER88} any cone generated by $e_{12345}$ and BF (resp. $e_{12345}$ and RT) is a domain of type II 
(resp. type III) in the sense of Voronoi \cite{Voronoi09}. 
\item[(vi)]
By \cite[2.2, 2.3, p.~1060]{ER88}
BF (resp. RT) consists of 48 faces (resp. 16 faces), 
and each equivalence class is an orbit of a group $G$ of order 1152, where 
the group $G$ is generated by three types of elements:
\begin{align*}
\tau_i&:x_i\mapsto -x_i,\ x_j\mapsto x_j\ (j\neq i)\\
\tau_{ij}&:x_i\mapsto x_j,\ x_j\mapsto x_i,\ x_k\mapsto x_k\ (k\neq i,j)\\
\sigma&:x_i\mapsto -x_i+\frac{1}{2}\sum_{k=1}^4x_k,
\end{align*}where $i,j,k\in[1,4]$. 
\end{enumerate}

%See \cite{Voronoi09}%, [Hulek11, 7.1, pp.~31-32] 
% and \cite{ER88}. 
%\Red{\cite{S08} gives a complete proof of \cite{Voronoi09}.} \par

\subsection{The faces $V_1\cap V_2$ and $V_3\cap V_4$}
The following lemma follows easily from \cite[\S~2, pp.1060-1064]{ER88}.  
\begin{lemma}The cone $W_0=V_3\cap V_4$ is $\GL(4,\bZ)$-equivalent 
to a red triangle face of $K$. Moreover 
$V_1\cap V_2$ is a black triangle face of $K$, hence 
$\GL(4,\bZ)$-equivalent to a black fork face.
\end{lemma}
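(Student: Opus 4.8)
The plan is to transport both faces through the Voronoi transformation (\ref{eq:Voronoi transf}) into the coordinates $\beta_{ij},\rho_{ij}$ of Subsection~\ref{subsec:BF and RT} and then simply read off the associated colored graph. Substituting (\ref{eq:Voronoi transf}) into the twelve generators $e_{ij}$ $((i,j)\neq(1,2))$, $g_{123}$, $g_{124}$, $f_{1234}$ of $K$ gives a bijection between them and the twelve forms $(x_i\pm x_j)^2$; the identifications I will actually need are
\begin{equation*}
g_{123}\mapsto (x_1+x_3)^2,\quad g_{124}\mapsto (x_1+x_4)^2,\quad f_{1234}\mapsto (x_3+x_4)^2,\quad e_{34}\mapsto (x_3-x_4)^2,
\end{equation*}
so that $g_{123},g_{124},f_{1234}$ become the black coordinates $\beta_{13},\beta_{14},\beta_{34}$ and $e_{34}$ becomes the red coordinate $\rho_{34}$. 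Since $K$ is the cone on these twelve forms, the codimension-one face spanned by nine of them is the facet on which the three omitted coordinates vanish, and by the correspondence (i)--(ii) of Subsection~\ref{subsec:BF and RT} its graph carries a black (resp.\ red) edge $pq$ for each omitted $\beta_{pq}$ (resp.\ $\rho_{pq}$).

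With this dictionary the two graphs are immediate. The face $V_1\cap V_2=\Sigma_{(i,j)\neq(1,2)}\bR_0 e_{ij}$ is obtained from $K$ by deleting exactly $g_{123},g_{124},f_{1234}$, so its vanishing coordinates are $\beta_{13},\beta_{14},\beta_{34}$ and the graph is the black triangle on the vertices $\{1,3,4\}$. The face $W_0=V_3\cap V_4=\Sigma_{(i,j)\neq(1,2),(3,4)}\bR_0 e_{ij}+\bR_0 f_{1234}$ is obtained from $K$ by deleting $g_{123},g_{124},e_{34}$ while retaining $f_{1234}$, so its vanishing coordinates are $\beta_{13},\beta_{14},\rho_{34}$ and the graph is the triangle on $\{1,3,4\}$ with $13,14$ black and $34$ red. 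Thus $V_1\cap V_2$ is literally a black triangle, while $W_0$ is a triangle with two black edges and one red edge.

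It remains to realize the two $\GL(4,\bZ)$-equivalences by elements of the symmetry group $G$ of (vi). For $W_0$ I would apply the reflection $\tau_1\colon x_1\mapsto -x_1$, which interchanges $\beta_{1j}$ with $\rho_{1j}$ and fixes the remaining generators, hence preserves $K$; it recolors the edges $13,14$ at the vertex $1$ from black to red and leaves $34$ red, turning the graph into the pure red triangle on $\{1,3,4\}$, so $W_0$ lies in the class RT. For the black triangle $V_1\cap V_2$ I would apply $\sigma$, which fixes every red form and sends $(x_i+x_j)^2$ to $(x_k+x_l)^2$ with $\{k,l\}=\{1,2,3,4\}\setminus\{i,j\}$; the vanishing set $\{\beta_{13},\beta_{14},\beta_{34}\}$ is carried to $\{\beta_{24},\beta_{23},\beta_{12}\}$, the three black edges through the vertex $2$, so the black triangle becomes the black fork centered at $2$ and $V_1\cap V_2$ lies in the class BF.

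The only genuine obstacle is to be sure that $\tau_1$ and $\sigma$, which act so transparently in the transformed coordinates, are honest automorphisms of the original integral cone $K$ and hence give $\GL(4,\bZ)$-equivalences, even though the Voronoi transformation itself has determinant $\pm2$; this is precisely the determination of the automorphism group $G\subset\GL(4,\bZ)$ of $K$ carried out in \cite[\S2, pp.~1060--1064]{ER88} and recorded in (vi). Granting it, the explicit recolorings above complete the proof. Alternatively, once $V_1\cap V_2$ has been identified as a black triangle one may bypass $\sigma$ and simply quote the two-class statement (iv): a black triangle cannot belong to the red-triangle class RT, so it must be equivalent to a black fork.
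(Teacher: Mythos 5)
Your proof is correct and follows essentially the same route as the paper: apply the Voronoi transformation to read off the colored graph of each face (black triangle on $\{1,3,4\}$ for $V_1\cap V_2$; triangle with black edges $13,14$ and red edge $34$ for $W_0$), then use $\tau_1$ to recolor $W_0$ into a red triangle. The only difference is that where the paper simply cites \cite[\S~2, p.~1062]{ER88} for the equivalence of a black triangle with a black fork, you exhibit the explicit element $\sigma$ of $G$ realizing it, which is a welcome extra detail but not a different argument.
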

\begin{proof}
It is obvious from Subsec.~\ref{subsec:BF and RT} 
that $V_1\cap V_2$ is a black triangle with vertices $1,3,4$. Hence it is $\GL(4,\bZ)$-equivalent to a black fork (BF) by 
\cite[\S~2, p.1062]{ER88}. 
On the other hand we note that  
$W_0$ ia a red triangle face (RT).  Indeed, 
after Voronoi transformation (\ref{eq:Voronoi transf}), 
$W_0$ is spanned by 
\begin{gather*}
(x_1+x_2)^2, (x_2+x_3)^2, (x_2+x_4)^2, (x_3+x_4)^2, \\
(x_1-x_2)^2, (x_1-x_3)^2, (x_1-x_4)^2, (x_2-x_3)^2, (x_2-x_4)^2, 
\end{gather*} Hence the missing terms are 
$$(x_1+x_3)^2, (x_1+x_4)^2, (x_3-x_4)^2.$$ 
Therefore the graph of $W_0$ is a triangle 
with two black edges and a red edge.  
The transformation $\tau_1:x_1\mapsto -x_1, x_j\mapsto x_j$ 
$(j\neq 1)$ belongs to the group $G$, by which the missing terms in $W_0$
are transformed into 
$$(x_1-x_3)^2, (x_1-x_4)^2, (x_3-x_4)^2,$$ whence 
the graph of $\tau_1(W_0)$ is a red triangle. 
Hence the face $W_0$ is $\GL(4,\bZ)$-equivalent to a red triangle. 
The rest is clear. 
\end{proof}

\begin{cor}
The cone $V_2$ (resp. $V_3$) is 
a 10-dimensional cone of type II (resp. type III) in the sense of Voronoi 
\cite{Voronoi09}. 
\end{cor}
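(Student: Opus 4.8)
The plan is to obtain the corollary directly from the foregoing lemma together with item~(v) of Subsec.~\ref{subsec:BF and RT}, which records (following \cite[2.5, p.~1063]{ER88}) that the cone generated by $e_{12345}$ and a face of $K$ in the class BF (resp.\ RT) is a domain of type II (resp.\ type III). Thus the whole problem reduces to exhibiting $V_2$ and $V_3$ as the cones over a black fork face and a red triangle face of $K$ with common apex $\omega=e_{12345}$. The $10$-dimensionality of both cones has already been noted in Subsec.~\ref{subsec:notation}, so only the type needs attention.

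First I would dispose of $V_2$. Comparing the generator lists in Subsec.~\ref{subsec:notation}, one reads off
\[
V_2=\Sigma_{(i,j)\neq(1,2)}\bR_0e_{ij}+\bR_0e_{12345}=(V_1\cap V_2)+\bR_0e_{12345},
\]
so that $V_2$ is generated by the codimension-one face $V_1\cap V_2$ of $K$ and by $\omega=e_{12345}$. By the lemma, $V_1\cap V_2$ is a black triangle face, hence lies in the class BF, and item~(v) of Subsec.~\ref{subsec:BF and RT} gives that $V_2$ is of type II. The cone $V_3$ is handled identically: again from Subsec.~\ref{subsec:notation},
\[
V_3=\Sigma_{(i,j)\neq(1,2),(3,4)}\bR_0e_{ij}+\bR_0e_{12345}+\bR_0f_{1234}=W_0+\bR_0e_{12345},
\]
with $W_0=V_3\cap V_4$. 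Since the lemma shows $W_0$ lies in the class RT, item~(v) yields that $V_3$ is of type III.

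The only point that needs care, and which I regard as the main (though mild) obstacle, is that item~(v) is phrased for the equivalence classes BF and RT, whereas the lemma only produces a $\GL(4,\bZ)$-equivalence of $V_1\cap V_2$ and $W_0$ with a standard black fork and red triangle. To invoke item~(v) verbatim one must know that the transformation carrying a given face to a standard representative also fixes the apex $e_{12345}$, so that it carries the cone ``face $+\,\bR_0e_{12345}$'' to a standard cone with the same description. This is supplied by item~(vi): each of BF and RT is a single orbit of the group $G$ of order $1152$, and the generators $\tau_i,\tau_{ij},\sigma$ of $G$ preserve $K$ and permute its twelve extreme rays, hence fix $\omega=e_{12345}$, which is proportional to their sum. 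As the type of a cone is a $\GL(4,\bZ)$-invariant, the classification of \cite{ER88} transfers to $V_2$ and $V_3$, completing the proof.
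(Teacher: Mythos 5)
Your proposal is correct and follows essentially the same route as the paper: both identify $V_2$ and $V_3$ as the cones generated by $e_{12345}$ together with the faces $V_1\cap V_2$ (BF) and $W_0$ (RT) respectively, and then invoke the classification of \cite[2.5, p.~1063]{ER88} recorded in item~(v). Your additional observation that the group $G$ permutes the extreme rays of $K$ and hence fixes $\omega=e_{12345}$ (being proportional to their sum) is a sensible clarification of why item~(v) applies uniformly across each equivalence class, a point the paper leaves implicit.
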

\begin{proof}Since $V_3$ is 
generated by $W_0$ and $e_{12345}$ and $W_0$ is RT, $V_3$ 
is of type III by \cite[2.5, p.~1063]{ER88}. Meanwhile 
since $V_2$ is generated by $V_1\cap V_2$ and $e_{12345}$, and 
$V_1\cap V_2$ is BF, $V_2$ is of type II.
\end{proof}

\section{The Delaunay decompositions in dimension 4}
\label{sec:Delaunay in dim 4}

\subsection{The Delaunay decomposition of $V_1$}
For $\{a,b,c,d\}=\{1,2,3,4\}$ we define 
\begin{gather*}
\sigma_{abcd}
:=\langle 0,s_a,s_{ab},s_{abc}, s_{1234}\rangle.
\end{gather*}

It is easy to see that the Delaunay decomposition of $V_1$ is 
given by 
\begin{gather*}
\sigma_{abcd},\ 
\text{their faces and their translates by $\bZ^4$.}
\end{gather*}

Let $B\in V_1^0$, $\sigma=\sigma_{1234}$ and $c$ 
the center of the D-cell $\sigma$. Then we have 
\begin{equation*}
B(e-c,e-c)=B(c,c)\ (\forall e\in \sigma\cap X),
\end{equation*} 
hence 
\begin{align*}
B(s_1,s_1)&=2B(s_1,c),\\
B(s_2,s_2)+2B(s_1,s_2)&=2B(s_2,c),\\
B(s_3,s_3)+2B(s_{12},s_3)&=2B(s_3,c),\\
B(s_4,s_4)+2B(s_{123},s_4)&=2B(s_4,c),\\
B(e,e)-2B(e,c)&>0\ (\text{otherwise}).
\end{align*} 

If $B(s_1,s_2)=0$, then the second equation reduces to 
$B(s_2,s_2)=2B(s_2,c)$,  which implies that 
$\langle\sigma,s_2\rangle$ is in the same D-cell of $B\in V_1\cap V_2$.  

\subsection{The fusion of D-cells of $V_1$}
Let $B_0\in (V_1\cap V_2)^0$. Since $B_0\in Y_4^{+}$, $B_0$ determines a 
Delaunay decomposition of $\XR$. Then there exists 
a unique D-cell $\sigma$ of $B_0$ such that 
$\sigma_{1234}\subset\sigma$. Let $c$ be 
the center of $\sigma$. 
Hence 
\begin{equation}\label{eq:fusion sigma1234 V1V2}
\begin{aligned}
B_0(s_1,s_1)&=2B_0(s_1,c_0),\\
B_0(s_2,s_2)&=2B_0(s_2,c_0),\\
B_0(s_3,s_3)+2B_0(s_{12},s_3)&=2B_0(s_3,c_0),\\
B_0(s_4,s_4)+2B_0(s_{123},s_4)&=2B_0(s_4,c_0),\\
B_0(e,e)-2B_0(e,c_0)&\geq 0\ (\text{otherwise}).
\end{aligned} 
\end{equation}
\begin{lemma}Let $\sigma$ be the unique D-cell $\sigma$ of $V_1\cap V_2$ 
such that 
$\sigma_{1234}\subset\sigma$.  Then 
$\sigma=\sigma_{1234}\cup \sigma_{2134}$. 
Similarly  $\sigma_{1243}\cup\sigma_{2143}$ is 
also a D-cell of $V_1\cap V_2$.
\end{lemma}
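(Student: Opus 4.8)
The plan is to pin down the fused cell $\sigma$ by two inclusions: first $\sigma\supseteq\sigma_{1234}\cup\sigma_{2134}$, obtained by checking that all six lattice points $0,s_1,s_2,s_{12},s_{123},s_{1234}$ are equidistant to the center $c_0$; and then the reverse inclusion, obtained by verifying that their convex hull is already a Delaunay cell of $B_0$. First I note that $\sigma_{2134}=\langle 0,s_2,s_{12},s_{123},s_{1234}\rangle$, so $\sigma_{1234}$ and $\sigma_{2134}$ meet along the common tetrahedral facet $F=\langle 0,s_{12},s_{123},s_{1234}\rangle$ and differ only in the opposite vertex, $s_1$ versus $s_2$; the identity $s_1+s_2=s_{12}$ is the geometric mechanism behind the fusion.

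For the inclusion $\sigma\supseteq\sigma_{1234}\cup\sigma_{2134}$ I would argue that $c_0$ is the common circumcenter of both simplices. On the face $V_1\cap V_2$ one has $B_0(s_1,s_2)=0$ (this face spans $\{b_{12}=0\}$, and this is exactly what removes the cross term from the second line of (\ref{eq:fusion sigma1234 V1V2})). Granting this, the second equation of (\ref{eq:fusion sigma1234 V1V2}) reads $B_0(s_2,s_2)=2B_0(s_2,c_0)$, i.e. $B_0(s_2-c_0,s_2-c_0)=B_0(c_0,c_0)$, so $s_2$ is nearest to $c_0$; telescoping the four equations of (\ref{eq:fusion sigma1234 V1V2}) (using $B_0(s_1,s_2)=0$) shows the same for $s_{12},s_{123},s_{1234}$, while $0$ and $s_1$ are nearest by construction. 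Hence all six vertices are equidistant to $c_0$, the common value $B_0(c_0,c_0)$ being the minimal distance by the last line of (\ref{eq:fusion sigma1234 V1V2}); therefore they are all vertices of $\sigma$ and $\sigma\supseteq\langle 0,s_1,s_2,s_{12},s_{123},s_{1234}\rangle$.

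Next I would show that this convex hull $P:=\langle 0,s_1,s_2,s_{12},s_{123},s_{1234}\rangle$ is exactly $\sigma_{1234}\cup\sigma_{2134}$. Let $\ell$ be the linear functional on $\XR$ given by the difference of the $s_1$- and $s_2$-coordinates; then $\ell$ vanishes on $0,s_{12},s_{123},s_{1234}$ while $\ell(s_1)=1$ and $\ell(s_2)=-1$, so $s_1$ and $s_2$ lie strictly on opposite sides of the hyperplane $H=\{\ell=0\}$ and the edge $[s_1,s_2]$ crosses $H$ at the midpoint $\tfrac12 s_{12}\in[0,s_{12}]\subset F$. Slicing $P$ by $H$ therefore gives $P\cap H=F$, with the two closed halves $P\cap\{\ell\ge0\}=\sigma_{1234}$ and $P\cap\{\ell\le0\}=\sigma_{2134}$, so $P=\sigma_{1234}\cup\sigma_{2134}$ is convex; moreover $P$ is integral with lattice points precisely its six vertices, since $\sigma_{1234}$, $\sigma_{2134}$ and $F$ are Delaunay cells of $B\in V_1^0$ and hence contain no lattice point other than their own vertices.

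The genuine obstacle is the reverse inclusion $\sigma\subseteq P$, i.e. ruling out that more than these two simplices fuse. By Lemma~\ref{lemma:fusion}, $\sigma$ is a union of Delaunay $4$-cells of $V_1$ and already contains the two cells making up $P$; were it strictly larger it would absorb a neighbouring $\sigma_{abcd}$ across one of the outer facets of $P$, forcing that neighbour's apex to become equidistant to $c_0$ as well. I would exclude this by following the degeneration $B(t)\to B_0$ $(t\to0)$ and showing that only the single Delaunay inequality attached to the degenerating wall $e_{12}$ collapses to an equality in the limit: the equality $B_0(s_1,s_2)=0$ is exactly the condition that the circumspheres of $\sigma_{1234}$ and $\sigma_{2134}$ coincide, whereas the inequalities $B(t)(e,e)-2B(t)(e,c(t))>0$ guarding the remaining facets of $P$ stay strict because no other wall of $V_1$ is crossed (here $V_1\cap V_2$ being a codimension-one face of $V_1$ is essential, precisely as $\sigma_5=\sigma_1\cup\sigma_2$ arises in Subsec.~\ref{subsec:degree two}); equivalently, one exhibits the full fusion pattern and checks that the resulting cells tile $\XR$, whose uniqueness then forces $\sigma=P$. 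Finally, the assertion for $\sigma_{1243}\cup\sigma_{2143}$ follows verbatim after interchanging the indices $3$ and $4$.
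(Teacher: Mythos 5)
Your first two steps are sound and agree with the paper's setup: on $(V_1\cap V_2)^0$ one indeed has $B_0(s_1,s_2)=0$, the six points $0,s_1,s_2,s_{12},s_{123},s_{1234}$ are all equidistant from $c_0$, and their convex hull is $\sigma_{1234}\cup\sigma_{2134}$. The gap is exactly where you locate it, in the reverse inclusion $\sigma\subseteq\langle 0,s_1,s_2,s_{12},s_{123},s_{1234}\rangle$: you must show that $B_0(e,e)-2B_0(e,c_0)>0$ for \emph{every} lattice point $e$ other than those six, and neither of the two strategies you gesture at actually does this. The claim that ``only the inequality attached to the degenerating wall $e_{12}$ collapses because no other wall of $V_1$ is crossed'' is circular: which Delaunay equidistance conditions become equalities on a given face of $V_1$ is precisely the content of the lemma (and of Table~1), not something you may import; a priori infinitely many of the strict inequalities $B(t)(e,e)-2B(t)(e,c(t))>0$ could tend to $0$ as $t\to 0$ even though only one generator of the Voronoi cone is switched off. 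Likewise ``exhibit the full fusion pattern and check that the resulting cells tile $\XR$'' is a restatement of what is to be proved, not a proof.

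The paper closes this gap with a short explicit computation that your proposal is missing. Write $B_0=\sum_{(i,j)\neq(1,2)}r_{ij}e_{ij}$ with every $r_{ij}>0$ (possible since $B_0$ lies in the relative interior of $V_1\cap V_2$); then one checks from (\ref{eq:fusion sigma1234 V1V2}) that for $x=\sum_{i=1}^4 x_is_i\in X$,
\begin{equation*}
B_0(x,x)-2B_0(x,c_0)=\sum_{i=1}^4 r_{i5}\,(x_i^2-x_i)
+\sum_{\substack{(i,j)\neq(1,2)\\ i<j}} r_{ij}\,\bigl((x_i-x_j)^2-(x_i-x_j)\bigr).
\end{equation*}
Since $n^2-n\geq 0$ for every integer $n$, each summand is nonnegative, and the sum vanishes iff $x_i\in\{0,1\}$ for all $i$ and $x_i-x_j\in\{0,1\}$ for all $i<j$ with $(i,j)\neq(1,2)$; enumerating the solutions gives exactly the six points above. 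This single positivity decomposition delivers both inclusions at once and is the step you need to supply in place of the limiting argument. With it in hand, the case of $\sigma_{1243}\cup\sigma_{2143}$ does follow by interchanging the roles of $3$ and $4$, as you say.
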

\begin{proof}
By definition there exists $r_{ij}>0$ such that 
\begin{align*}
B_0&=\sum_{(i,j)\neq (1,2)}r_{ij}e_{ij}.
\end{align*}
It easy to see that 
if $x=\sum_{i=1}^4x_is_i$, then by (\ref{eq:fusion sigma1234 V1V2})
\begin{align*}
&B_0(x,x)-2B_0(x,c_0)\\
=&\sum_{i=1}^4r_{i5}(x_i^2-x_i)+\sum_{
\tiny
\begin{matrix}
(ij)\neq (12)\\
1\leq i<j\leq 4
\end{matrix}}
r_{ij}((x_i-x_j)^2-(x_i-x_j))\geq 0.
\end{align*}

Since every $r_{ij}>0$ for $(ij)\neq(12)$, 
$B_0(x,x)-2B_0(x,c_0)=0$ iff every term in the rhs is equal to $0$:
\begin{gather*}
x_i^2-x_i=0, (x_i-x_j)^2-(x_i-x_j)=0\ \forall i,j.
\end{gather*} Hence $x_i=0,1$ and $x_i-x_j=0,1$ for $i<j$ $(i,j)\neq (1,2)$. 
It follows that $B_0(x,x)-2B_0(x,c_0)=0$ iff
\begin{gather*}
x=0, (1,0,0,0), (0,1,0,0), (1,1,0,0), (1,1,1,0), (1,1,1,1).
\end{gather*}

This shows $\sigma=\sigma_{1234}\cup\sigma_{2134}$. 
In summary, $\sigma_{1234}$ and $\sigma_{2134}$ 
fuse together into a unique D-cell of $V_1\cap V_2$ 
when $B\in V_1^0$ approaches a point $B_0$ of $V_1\cap V_2$. 
Similarly $\sigma_{1243}\cup\sigma_{2143}$ is a D-cell of $V_1\cap V_2$.   
\end{proof}

\subsection{Tables 1 and 2}
\label{subsec:tables 1 and 2}

By similar computations, we obtain Tables 1 and 2. 
Let us explain what the tables show. The D-cells $\sigma_{1234}$ and $\sigma_{2134}$ fuse together into a D-cell of $V_1\cap V_2$, which divides into two D-cells of $V_2$
$$[0,s_1,s_2,s_{123},s_{1234}], [s_1,s_2,s_{12},s_{123},s_{1234}], 
$$which are no longer D-cells of $V_1$. These are numbered 1 and 2 respectively. 
The D-cells numbered from 3 to 12 are understood similarly. The D-cells of $V_1$ numbered from 13 to 24 are 
D-cells of both $V_1\cap V_2$ and $V_2$. This is what Table 1 shows.\par 
Next the D-cells of $V_2$ numbered 2 and 4 fuse together into a D-cell of 
$V_2\cap V_3$, which decomposes into two D-cells of $V_3$:
$$[s_1,s_2,s_{12},s_{123},s_{124}], [s_1,s_2,s_{123},s_{124},s_{1234}]
$$ Similarly the D-cells of $V_2$ numbered 9 and 11 (resp. 17 and 18, 19 and 20)
fuse together into a  D-cell of $V_2\cap V_3$, which divides into two D-cells of $V_3$ which are no longer D-cells of $V_2$. However the other D-cells of 
$V_2$ are also D-cells of both $V_2\cap V_3$ and $V_3$. This is what Table 2 shows.  

The following is clear from Tables:
\begin{thm}
\label{thm:Vor dim 4}{\rm (\cite{Voronoi09})}\ 
The Delaunay decomposition of $V_i$ $(i=1,2,3)$ consists of $4$-dimensional integral simplicies and their faces, where we mean by a $4$-dimensional integral simplex a convex closure $\langle a_0,\cdots, a_4\rangle$ such that 
$a_i\in X$ and $a_i-a_0$ is a $\bZ$-basis of $X$. In particular, the Delaunay decomposition of $V_i$ is simplicially generating.
\end{thm}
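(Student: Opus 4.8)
The plan is to read off, from the explicit Delaunay data assembled in Subsec.~\ref{subsec:tables 1 and 2}, the complete list of the $4$-dimensional Delaunay cells of each maximal Voronoi cone $V_1,V_2,V_3$ (up to $\bZ^4$-translation), and then to verify cell by cell that every such cell is a unimodular simplex, i.e.\ of the form $\langle a_0,\dots,a_4\rangle$ with $a_i\in X$ and with $a_1-a_0,\dots,a_4-a_0$ a $\bZ$-basis of $X$. Granting this, the ``in particular'' clause is immediate: whenever such a cell $\sigma$ contains the origin, say $a_0=0$, the vectors $a_1,\dots,a_4$ form a basis, so $C(0,\sigma)\cap X=\Semi(0,\{a_1,\dots,a_4\})=\Semi(0,\sigma\cap X)$, and $\sigma$ is already totally generating, a one-piece instance of simplicial generation. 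Cells not containing $0$ are irrelevant to that notion, but they too must be confirmed to be unimodular simplices for the first assertion.

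First I would dispose of $V_1$ directly. Its cells are the $\sigma_{abcd}=\langle 0,s_a,s_{ab},s_{abc},s_{1234}\rangle$, and the edge vectors $s_a,s_{ab},s_{abc},s_{1234}$ issuing from $0$ have, in the basis $s_1,\dots,s_4$, a triangular coordinate matrix with $1$'s on the diagonal; hence each $\sigma_{abcd}$ is unimodular.

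Next I would propagate to $V_2$, and then to $V_3$, by the fusion mechanism of Lemma~\ref{lemma:fusion}. Crossing from $V_1^0$ to the wall $(V_1\cap V_2)^0$, the computation already carried out for the cell $\sigma_{1234}\cup\sigma_{2134}$ identifies all pairs of $V_1$-cells that fuse into a single cell of $V_1\cap V_2$ (solve $B_0(x,x)-2B_0(x,c_0)=0$ on the wall). Crossing then into $V_2^0$, each fused region resubdivides the other way into two new $V_2$-cells; I would pin down this subdivision by choosing $B\in V_2^0$, computing the centers of the candidate cells, and checking the Delaunay inequalities exactly as in the $V_1$ computation. This produces the vertex lists tabulated as cells $1$--$12$ of Table~1 (six fused regions, each flipped into two simplices), cells $13$--$24$ being those unchanged from $V_1$. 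The same procedure applied across $(V_2\cap V_3)^0$ yields Table~2 and the cells of $V_3$.

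The bulk of the work, and the only real obstacle, is the bookkeeping needed to make the tables correct together with the ensuing unimodularity check: for each listed cell one forms the $4\times 4$ matrix of edge vectors from a chosen vertex and confirms $\det=\pm 1$. Because everything is $\bZ^4$- and $\GL(4,\bZ)$-equivariant, it suffices to treat one representative in each orbit of the group $G$ of order $1152$ of Subsec.~\ref{subsec:BF and RT}, so the verification is finite and elementary. The substantive point in dimension $4$ is precisely that no cell turns out to be a non-unimodular empty simplex; this is what makes $V_3$, although of type III, still simplicially generating. Once all cells are confirmed unimodular, both assertions of the theorem follow.
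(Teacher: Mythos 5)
Your proposal follows essentially the same route as the paper: the paper likewise lists the cells $\sigma_{abcd}$ of $V_1$ explicitly, tracks the fusion across the walls $V_1\cap V_2$ and $V_2\cap V_3$ and the resulting re-subdivisions into the cells of $V_2$ and $V_3$ recorded in Tables~1 and~2, and then reads off the theorem from the fact that every listed cell is a unimodular simplex. Your added remarks (the triangular edge-matrix for $V_1$, the reduction of the bookkeeping via the group $G$, and the derivation of the ``in particular'' clause) are consistent refinements of that same argument.
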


\begin{table}[ht]
 \begin{centering}
 \renewcommand{\arraycolsep}{0.6em}
 \renewcommand{\arraystretch}{1.4}
 $\begin{array}{|l|l|l|c|}\hline
V_1&V_1\cap V_2&V_2&\text{no}\\
\hline
\sigma_{1234}&\sigma_{1234}\cup\sigma_{2134}&\langle 0,s_1,s_2,s_{123},s_{1234}\rangle &1\\
\sigma_{2134}&&\langle s_1,s_2,s_{12},s_{123},s_{1234}\rangle &2\\
\sigma_{1243}&\sigma_{1243}\cup\sigma_{2143}&\langle 0,s_1,s_2,s_{124},s_{1234}\rangle&3\\
\sigma_{2143}&&\langle s_1,s_2,s_{12},s_{124},s_{1234}\rangle&4\\
\sigma_{3124}&\sigma_{3124}\cup\sigma_{3214}&\langle 0,s_3,s_{13},s_{23},s_{1234}\rangle&5\\
\sigma_{3214}&&\langle 0,s_{13},s_{23},s_{123},s_{1234}\rangle&6\\
\sigma_{4123}&\sigma_{4123}\cup\sigma_{4213}&\langle 0,s_4,s_{14},s_{24},s_{1234}\rangle&7\\
\sigma_{4213}&&\langle 0,s_{14},s_{24},s_{124},s_{1234}\rangle&8\\
\sigma_{3412}&\sigma_{3412}\cup\sigma_{3421}&\langle 0,s_3,s_{34},s_{134},s_{234}\rangle&9\\
\sigma_{3421}&&\langle 0,s_3,s_{134},s_{234},s_{1234}\rangle&10\\
\sigma_{4312}&\sigma_{4312}\cup\sigma_{4321}
&\langle 0,s_4,s_{34},s_{134},s_{234}\rangle&11\\
\sigma_{4321}&&\langle 0,s_4,s_{134},s_{234},s_{1234}\rangle&12\\
\sigma_{1324}&\sigma_{1324}&\sigma_{1324}&13\\
\sigma_{1423}&\sigma_{1423}&\sigma_{1423}&14\\
\sigma_{2314}&\sigma_{2314}&\sigma_{2314}&15\\
\sigma_{2413}&\sigma_{2413}&\sigma_{2413}&16\\
\sigma_{1342}&\sigma_{1342}&\sigma_{1342}&17\\
\sigma_{1432}&\sigma_{1432}&\sigma_{1432}&18\\
\sigma_{2341}&\sigma_{2341}&\sigma_{2341}&19\\
\sigma_{2431}&\sigma_{2431}&\sigma_{2431}&20\\
\sigma_{3142}&\sigma_{3142}&\sigma_{3142}&21\\
\sigma_{4132}&\sigma_{4132}&\sigma_{4132}&22\\
\sigma_{3241}&\sigma_{3241}&\sigma_{3241}&23\\
\sigma_{4231}&\sigma_{4231}&\sigma_{4231}&24\\
\hline
 \end{array}$
 \caption{Fusion and division (1)}
 \label{Tab_Fusion_Decomp}
 \end{centering}
 \end{table}

\newpage
 \begin{table}[ht]
 \begin{centering}
 \renewcommand{\arraycolsep}{0.6em}
 \renewcommand{\arraystretch}{1.4}
 $\begin{array}{|c|l|l|l|}\hline
\text{no}&V_2&V_2\cap V_3&V_3\\
\hline
2&\langle s_1,s_2,s_{12},s_{123},s_{1234}\rangle
&\langle s_1,s_2,s_{12},s_{123},s_{1234}\rangle
&\langle s_1,s_2,s_{12},s_{123},s_{124}\rangle\\
4&\langle s_1,s_2,s_{12},s_{124},s_{1234}\rangle
&\cup \langle s_1,s_2,s_{12},s_{124},s_{1234}\rangle&\langle s_1,s_2,s_{123},s_{124},s_{1234}\rangle\\
9&\langle 0,s_3,s_{34},s_{134},s_{234}\rangle&\langle 0,s_3,s_{34},s_{134},s_{234}\rangle&\langle 0,s_3,s_4,s_{134},s_{234}\rangle\\
11&\langle 0,s_4,s_{34},s_{134},s_{234}\rangle&\cup \langle 0,s_4,s_{34},s_{134},s_{234}\rangle&\langle s_3,s_4,s_{34},s_{134},s_{234}\rangle\\
17&\sigma_{1342}&\sigma_{1342}\cup\sigma_{1432}&\langle 0,s_1,s_{13},s_{14},s_{1234}\rangle\\
18&\sigma_{1432}&&\langle 0,s_{13},s_{14},s_{134},s_{1234}\rangle\\
19&\sigma_{2341}&\sigma_{2341}\cup\sigma_{2431}&\langle 0,s_2,s_{23},s_{24},,s_{1234}\rangle\\
20&\sigma_{2431}&&\langle 0,s_{23},s_{24},s_{234},s_{1234}\rangle\\
1&\langle 0,s_1,s_2,s_{123},s_{1234}\rangle
&\langle 0,s_1,s_2,s_{123},s_{1234}\rangle
&\langle 0,s_1,s_2,s_{123},s_{1234}\rangle
\\%
3&\langle 0,s_1,s_2,s_{124},s_{1234}\rangle&\langle 0,s_1,s_2,s_{124},s_{1234}\rangle&\langle 0,s_1,s_2,s_{124},s_{1234}\rangle\\
5&\langle 0,s_3,s_{13},s_{23},s_{1234}\rangle&\langle 0,s_3,s_{13},s_{23},s_{1234}\rangle&\langle 0,s_3,s_{13},s_{23},s_{1234}\rangle\\
6&\langle 0,s_{13},s_{23},s_{123},s_{1234}\rangle&\langle 0,s_{13},s_{23},s_{123},s_{1234}\rangle&\langle 0,s_{13},s_{23},s_{123},s_{1234}\rangle\\
7&\langle 0,s_4,s_{14},s_{24},s_{1234}\rangle&\langle 0,s_4,s_{14},s_{24},s_{1234}\rangle&\langle 0,s_4,s_{14},s_{24},s_{1234}\rangle\\
8&\langle 0,s_{14},s_{24},s_{124},s_{1234}\rangle&\langle 0,s_{14},s_{24},s_{124},s_{1234}\rangle&\langle 0,s_{14},s_{24},s_{124},s_{1234}\rangle\\
10&\langle 0,s_3,s_{134},s_{234},s_{1234}\rangle&\langle 0,s_3,s_{134},s_{234},s_{1234}\rangle&\langle 0,s_3,s_{134},s_{234},s_{1234}\rangle\\
12&\langle 0,s_4,s_{134},s_{234},s_{1234}\rangle&\langle 0,s_4,s_{134},s_{234},s_{1234}\rangle&\langle 0,s_4,s_{134},s_{234},s_{1234}\rangle\\
13&\sigma_{1324}&\sigma_{1324}&\sigma_{1324}\\
14&\sigma_{1423}&\sigma_{1423}&\sigma_{1423}\\
15&\sigma_{2314}&\sigma_{2314}&\sigma_{2314}\\
16&\sigma_{2413}&\sigma_{2413}&\sigma_{2413}\\
21&\sigma_{3142}&\sigma_{3142}&\sigma_{3142}\\
22&\sigma_{4132}&\sigma_{4132}&\sigma_{4132}\\
23&\sigma_{3241}&\sigma_{3241}&\sigma_{3241}\\
24&\sigma_{4231}&\sigma_{4231}&\sigma_{4231}\\
\hline
 \end{array}$
 \caption{Fusion and division (2)}
 \label{Tab_Fusion_Decomp 2}
 \end{centering}
 \end{table}

\end{document}